\numberwithin{equation}{section}
\newcommand{\ie}{i.e.\ }
\newcommand{\eg}{e.g.\ }
\newcommand{\wrt}{w.r.t.\ }
\newcommand{\restr}[1]{\raisebox{-.5ex}{$|$}_{#1}}
\newcommand{\norm}[1]{||#1||}
\newcommand{\diff}{\mathop{}\!\mathrm{d}}
\newcommand{\clos}{\operatorname{clos}}
\newcommand{\R}{\mathbb{R}}
\newcommand{\xcal}{\mathcal{X}}
\newcommand{\ycal}{\mathcal{Y}}
\newcommand{\zcal}{\mathcal{Z}}
\begin{document}
\mainmatter              
\title{An optimally stable approximation of\\ reactive transport using discrete test\\ and infinite trial spaces}
\titlerunning{An optimally stable approximation for reactive transport}
\author{Lukas Renelt\inst{1}, Mario Ohlberger\inst{1} \and
Christian Engwer\inst{1}}
\authorrunning{L. Renelt, M. Ohlberger, C. Engwer} 

\institute{Institute for Analysis and Numerics, University of Münster, Einsteinstr. 62,\\ 48149 M\"unster, Germany.
\url{https://www.uni-muenster.de/AMM} 
\email{\{lukas.renelt,mario.ohlberger,christian.engwer\}@uni-muenster.de}
}

\maketitle              

\begin{abstract}
In this contribution we propose an optimally stable ultraweak Petrov-Galerkin variational formulation and subsequent discretization for stationary reactive transport problems.
The discretization is exclusively based on the choice of discrete approximate test spaces, while the trial space is a priori infinite dimensional.
The solution in the trial space or even only functional evaluations of the solution are obtained in a post-processing step.
We detail the theoretical framework and demonstrate its usage in a numerical experiment that is motivated from modeling of catalytic filters.

\keywords{optimal stability, reactive transport, ultraweak formulation}
\end{abstract}

\section{Introduction}
In this contribution we are considered with stationary reactive transport equations of the form\vspace*{-1em}
\begin{equation}\label{eq:transport:strongProblem}
\begin{cases}
A_\circ u := \nabla\cdot(\vec{b} u) + cu &= f_\circ \qquad  \text{in }\;\Omega,\\
\hfill u &= g_D \hfill \text{on }\Gamma_- \ .\\
\end{cases}
\end{equation}
Here,  $\Omega\subset\R^n$ denotes an open, bounded polyhedral domain with Lipschitz-boundary $\Gamma := \partial\Omega$,  $\vec{b}\in H^1(\mathop{div}, \Omega)$  a
given divergence-free\footnote{We restrict to divergence free velocities. However, non-divergence free fields may be considered as well.} transport field and $c\in L^\infty(\Omega)$ a reaction coefficient.
Moreover, $f_\circ\in L^2(\Omega)$ denotes the source term and $g_D\in L^2(\Gamma_-)$ the  boundary values at the inflow boundary,
where the  in- and outflow boundary parts are defined as 
$
\Gamma_\pm := \{ z\in\Gamma \;|\; \vec{b}(z)\cdot \nu(z) \gtrless 0\}
$
and $\nu(\cdot)$ denotes the outer unit normal. In a weak sense, we interpret $A_\circ: U \to V'$ as an operator that maps from a trial space $U$ to the dual of a test space $V$,
where $U, V$ need to be chosen appropriately.

A known challenge in the numerical treatment of transport problems is the stability of the chosen variational formulation and discretization. 
A common approach for stabilization is the enrichment of the test space by supremizers leading to a Petrov-Galerkin scheme. Depending on the norms on trial and test space, determining the supremizer for a given trial function often requires solving an additional equation with complexity of the full problem.
In this contribution, we use an approach using `optimal' test spaces, \ie spaces that a-priori include all supremizers and thus are unconditionally stable (see \eg\cite{demkowiczDPG1,DahmenHuangSchwab}). However, unlike the discontinuous Petrov-Galerkin method (DPG) and the method defined in~\cite{DahmenHuangSchwab} we discretize by first choosing the discrete test space and
subsequently determine a corresponding trial space as it has been used in~\cite{BrunkenSmetanaUrban,HenningPalitta}.
The resulting approach is computationally more efficient and leads to an optimally stable formulation and discretization.
We show that it is actually not needed to construct a discrete trial space at all since all computations can be performed
using a related normal equation that is defined solely on the test space. Interpreting the solution to the normal equation as a dual variable, this is similar to a (FOS)LL* approach (see~\cite{caiFOSLL} and references therein).
We provide the theoretical framework in Section 2 and the resulting approximation scheme in Section 3.
Numerical experiments for reactive transport in catalytic filters are
given in Section 4. The experiments show 
advantages and challenges 
of the proposed approach. Resulting perspectives for model order reduction based on the proposed framework are discussed in the conclusion.

\section{Ultraweak Petrov-Galerkin variational formulation and related normal equation}
\newcommand{\Ltwoout}{L^2(\Gamma_+, |\vec{b}\vec{\nu}|)}


Following the ideas developed in \cite{BrunkenSmetanaUrban}, we derive an ultraweak variational Petrov-Galerkin formulation for the reactive transport problem~\eqref{eq:transport:strongProblem}
as well as a corresponding normal equation that only depends on the
test space.
We
start with the definition of the formal adjoint operator $A_\circ^*:V \to U'$ given through
\begin{equation}
  v\mapsto A_\circ^*[v], \qquad (A_\circ^*[v])(u) := {(u,-b\nabla v + cv)}_{L^2(\Omega)} + {(u\restr{\Gamma_+}, v\restr{\Gamma_+})}_{\Ltwoout}
\end{equation}
using the weighted inner product
$
{(u,v)}_{L^2(\Gamma_{\pm}, |\vec{b}\vec{\nu}|)} := \int_{\Gamma_\pm} u\,v\,|\vec{b}\vec{\nu}| \diff s .
$

\subsection{Choice of appropriate function spaces}

Let us start with regular spaces $U=C^\infty_{\Gamma_-}(\Omega)$, the space of $C^\infty$-functions vanishing on $\Gamma_-$ and $V=C^\infty(\Omega)$.
For given $v\in V$ the operator $A_\circ^*[v]$ is continuous, and hence in $U'$, if we choose
\begin{equation}
  \norm{u}_U := \sqrt{\norm{u}_{L^2(\Omega)}^2 + \norm{u\restr{\Gamma_+}}_{\Ltwoout}^2}
\end{equation}
as the norm on $U$. Via closure we obtain the ultraweak trial space $\xcal := \clos_{\norm{\cdot}_U}(U)$ equipped with the continuous extension of the norm $\norm{\cdot}_U$.
For this trial space we have the following characterization (without proof).

\begin{proposition}
There exists a linear and continuous trace operator
\begin{equation*}
\gamma: \xcal \rightarrow \Ltwoout
\end{equation*}
as well as a linear and continuous projection operator
\begin{equation}
  \mathrm{pr}_{L^2}: \xcal \rightarrow L^2(\Omega)
\end{equation}
fulfilling $\gamma(u) = u\restr{\Gamma_+}$ and $\mathrm{pr}_{L^2}(u) = u$ for all $u\in U \subset \xcal$.
\end{proposition}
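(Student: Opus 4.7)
The plan is to apply the bounded linear transformation (BLT) theorem, extending by continuity from the dense subset $U \subset \xcal$. On smooth functions I can straightforwardly define
\[
\gamma_0: U \to \Ltwoout, \quad u \mapsto u\restr{\Gamma_+}, \qquad \mathrm{pr}_{L^2,0}: U \to L^2(\Omega), \quad u \mapsto u,
\]
both of which are well-posed since $U = C^\infty_{\Gamma_-}(\Omega)$ consists of smooth functions whose outflow restrictions are classical.

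The key observation is that the norm $\norm{\cdot}_U$ was designed precisely so as to render these two maps continuous. From the defining identity
\[
\norm{u}_U^2 = \norm{u}_{L^2(\Omega)}^2 + \norm{u\restr{\Gamma_+}}_{\Ltwoout}^2
\]
one reads off $\norm{\mathrm{pr}_{L^2,0}\,u}_{L^2(\Omega)} \leq \norm{u}_U$ and $\norm{\gamma_0 u}_{\Ltwoout} \leq \norm{u}_U$, so both operators have operator norm at most one. Since $L^2(\Omega)$ and $\Ltwoout$ are Banach spaces and $U$ is dense in $\xcal$ by construction, BLT yields unique continuous linear extensions $\mathrm{pr}_{L^2}:\xcal \to L^2(\Omega)$ and $\gamma:\xcal \to \Ltwoout$ coinciding with $\mathrm{pr}_{L^2,0}$ and $\gamma_0$ on $U$, which is exactly the assertion.

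The only real obstacle is a conceptual one: fixing the precise meaning of $\xcal = \clos_{\norm{\cdot}_U}(U)$. Since $(U,\norm{\cdot}_U)$ is not a priori sitting inside a larger Banach space, I would read this as the abstract completion, realized as equivalence classes of $\norm{\cdot}_U$-Cauchy sequences modulo null sequences, with $U$ canonically embedded via constant sequences. Under this reading the extended operators act concretely by $\gamma([u_n]) = \lim_{n\to\infty} u_n\restr{\Gamma_+}$ in $\Ltwoout$ and $\mathrm{pr}_{L^2}([u_n]) = \lim_{n\to\infty} u_n$ in $L^2(\Omega)$, with convergence in the respective targets guaranteed by the operator bounds above and independence from the chosen Cauchy representative following from the same estimates. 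No regularity or Sobolev trace theorem enters, since the outflow trace has been encoded into the trial norm by construction.
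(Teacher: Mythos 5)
Your argument is correct: the bound $\max\bigl(\norm{\mathrm{pr}_{L^2,0}\,u}_{L^2(\Omega)},\,\norm{\gamma_0 u}_{\Ltwoout}\bigr)\leq\norm{u}_U$ is immediate from the definition of $\norm{\cdot}_U$, and extension by continuity from the dense subspace $U$ to the completion $\xcal$ is exactly the right mechanism; your remark that $\xcal$ must be read as the abstract completion (no ambient space is given a priori) correctly identifies the only delicate point. The paper states this proposition explicitly \emph{without proof}, so there is nothing to compare against, but your argument is the standard one the authors evidently intend, and it is consistent with their subsequent identification of $\xcal$ with $L^2(\Omega)\times\Ltwoout$ via $\Phi=(\mathrm{pr}_{L^2},\gamma)$.
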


\begin{proposition}\label{thm:trialIsometry}
  The trial space $\xcal$ is isometrically isomorphic to the Sobolev-space $\xcal_{L^2}$ defined as
  \[
  \xcal_{L^2} := L^2(\Omega) \times \Ltwoout
  \]
  equipped with the canonical norm $\norm{(u, \hat{u})}_{\xcal_{L^2}}^2 := \norm{u}_{L^2(\Omega)}^2 + \norm{\hat{u}}_{\Ltwoout}^2$.
  The isometry $\Phi: \xcal \rightarrow \xcal_{L^2}$ is given as
  \begin{equation*}
    \Phi(u) := (pr_{L^2}(u), \gamma(u)).
  \end{equation*} 

\end{proposition}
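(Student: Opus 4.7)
The plan is to verify in turn that $\Phi$ is linear, is an isometry, and is surjective; by the preceding proposition the first point is automatic, so the work lies in the latter two.

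For the isometry, I would first check the claim on the dense subspace $U\subset\xcal$, where by construction $\mathrm{pr}_{L^2}(u)=u$ and $\gamma(u) = u\restr{\Gamma_+}$. Then
\[
\norm{\Phi(u)}_{\xcal_{L^2}}^2 = \norm{u}_{L^2(\Omega)}^2 + \norm{u\restr{\Gamma_+}}_{\Ltwoout}^2 = \norm{u}_U^2
\]
is just the definition of the norm on $U$. Since $\mathrm{pr}_{L^2}$ and $\gamma$ are continuous, $\Phi$ is continuous on $\xcal$, and the isometry property extends from $U$ to $\clos_{\norm{\cdot}_U}(U)=\xcal$ by a standard density argument.

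For surjectivity, I would use that any isometry into a Banach space has closed image, so it suffices to show that $\Phi(U) = \{(u,u\restr{\Gamma_+}) : u\in U\}$ is dense in $\xcal_{L^2}$. Given $(w,\hat w)\in L^2(\Omega)\times \Ltwoout$, the strategy is to split the approximation: first pick $v_n\in U$ whose traces $v_n\restr{\Gamma_+}$ approximate $\hat w$ in $\Ltwoout$, then correct the bulk part using compactly supported test functions. Concretely, for each fixed $v_n$ I would choose $\phi_{n,k}\in C^\infty_c(\Omega)$ with $\phi_{n,k}\to w-v_n$ in $L^2(\Omega)$; since such $\phi_{n,k}$ vanish near $\partial\Omega$, the sum $u_{n,k}:=v_n+\phi_{n,k}$ still lies in $U$ and satisfies $u_{n,k}\restr{\Gamma_+}=v_n\restr{\Gamma_+}$. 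A diagonal extraction then yields a sequence in $U$ whose images under $\Phi$ converge to $(w,\hat w)$.

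The main obstacle is the density of the trace set $\{v\restr{\Gamma_+} : v\in U\}$ in $\Ltwoout$. The natural route is to approximate an arbitrary $\hat w\in\Ltwoout$ first by smooth functions with compact support in the relative interior of $\Gamma_+$ (so away from the closure of $\Gamma_-$ and from the characteristic boundary where $\vec b\cdot\nu=0$), and then to extend each such boundary function into $\Omega$ using a tubular neighborhood of $\Gamma_+$ together with a cutoff that vanishes near $\Gamma_-$. The existence of a suitable tubular neighborhood is guaranteed by the Lipschitz regularity of $\Gamma$, though one needs to be a little careful where $\Gamma_+$ meets $\overline{\Gamma_-}\cup\Gamma_0$; this is the only place where the geometry of the decomposition $\Gamma=\Gamma_-\cup\Gamma_0\cup\Gamma_+$ really enters the argument.
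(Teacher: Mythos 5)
The paper states this proposition without proof, so there is no argument of the authors' to compare yours against; judged on its own, your proposal is correct and identifies the right structure. The isometry on the dense subspace $U$ is immediate from the definition of $\norm{\cdot}_U$, and reducing surjectivity to density of $\Phi(U)$ via the closed-range property of an isometry defined on a complete space is the right move. Your decomposition of the density argument --- first match the trace, then correct the interior component with $C^\infty_c(\Omega)$ perturbations that leave the trace untouched --- works, and the diagonal extraction is legitimate precisely because the two components of $\norm{\cdot}_{\xcal_{L^2}}$ decouple. The only genuinely delicate ingredient is, as you note, the density of $\{v\restr{\Gamma_+} : v\in U\}$ in the weighted space $L^2(\Gamma_+, |\vec{b}\cdot\nu|)$. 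Two remarks there: first, since the weight is bounded above (assuming $\vec{b}\cdot\nu\in L^\infty(\Gamma)$, as the pointwise definition of $\Gamma_\pm$ implicitly requires) but may degenerate toward the characteristic set, the truncation to compact subsets of the relatively open set $\Gamma_+$ is justified by dominated convergence of $|\hat{w}|^2\,|\vec{b}\cdot\nu|$, and no lower bound on the weight is needed; second, since the paper assumes $\Omega$ polyhedral, the extension step is simpler than your tubular-neighborhood discussion suggests --- a boundary function compactly supported in the relative interior of a face can be extended constantly along the normal direction of that face and cut off away from $\overline{\Gamma_-}$, which directly produces an element of $C^\infty_{\Gamma_-}(\Omega)$ with the prescribed restriction to $\Gamma_+$.
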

%

\begin{corollary}
  It holds that
\begin{align*}
  \norm{x}_\xcal^2 &=\quad \norm{\mathrm{pr}_{L^2}(x)}_{L^2(\Omega)}^2 + \norm{\gamma(x)}_{\Ltwoout}^2 = {(x,x)}_\xcal \\
\text{with } \  (x,x')_\xcal &:=\quad {(\mathrm{pr}_{L^2}(x), \mathrm{pr}_{L^2}(x'))}_{L^2(\Omega)} + {(\gamma(x), \gamma(x'))}_{\Ltwoout}.
\end{align*}
In particular, $\xcal$ is a Hilbert space and there exists the Riesz-map $R_\xcal: \xcal' \rightarrow \xcal$.
\end{corollary}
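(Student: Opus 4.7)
The plan is to derive this corollary as a direct Hilbert-space reformulation of Proposition~\ref{thm:trialIsometry}. First I would equip the product space $\xcal_{L^2} = L^2(\Omega) \times \Ltwoout$ with its canonical inner product
\[
    ((u,\hat{u}), (v,\hat{v}))_{\xcal_{L^2}} := (u,v)_{L^2(\Omega)} + (\hat{u}, \hat{v})_{\Ltwoout},
\]
and observe that it induces the norm $\norm{\cdot}_{\xcal_{L^2}}$ from Proposition~\ref{thm:trialIsometry} and turns $\xcal_{L^2}$ into a Hilbert space as a direct sum of two Hilbert spaces.

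Next I would use the isometry $\Phi(x) = (\mathrm{pr}_{L^2}(x), \gamma(x))$ to transport both the norm and the inner product back to $\xcal$. The norm identity is immediate from the isometry property:
\[
    \norm{x}_\xcal^2 = \norm{\Phi(x)}_{\xcal_{L^2}}^2 = \norm{\mathrm{pr}_{L^2}(x)}_{L^2(\Omega)}^2 + \norm{\gamma(x)}_{\Ltwoout}^2,
\]
and defining $(x,x')_\xcal := (\Phi(x), \Phi(x'))_{\xcal_{L^2}}$ reproduces exactly the bilinear form written in the statement. That $(x,x)_\xcal = \norm{x}_\xcal^2$ then holds by construction.

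For the Hilbert structure, I would note that $\xcal$ is complete by its very definition as the closure $\clos_{\norm{\cdot}_U}(U)$, and that $\Phi$ is an isometric bijection onto the Hilbert space $\xcal_{L^2}$; hence the pulled-back bilinear form really is an inner product inducing $\norm{\cdot}_\xcal$. Existence of the Riesz map $R_\xcal : \xcal' \to \xcal$ then follows from the Riesz representation theorem applied to $\xcal$. I do not anticipate any genuine obstacle: the only substantive content has already been packaged into Proposition~\ref{thm:trialIsometry}, and the corollary amounts to little more than bookkeeping, with the polarization identity handling the transfer of the inner product if one wishes to be fully pedantic.
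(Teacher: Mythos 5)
Your proposal is correct and follows exactly the route the paper intends: the corollary is stated without proof precisely because it is the pullback of the canonical Hilbert structure on $\xcal_{L^2}$ along the isometry $\Phi$ from Proposition~\ref{thm:trialIsometry}, followed by the Riesz representation theorem. Your remarks on completeness of the closure and on polarization fill in the only details the paper leaves implicit.
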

\par
Hence, by continuous extension we can interpret $A_\circ^*[v]$ as an operator acting on $\xcal$, where in the following we will write $(u, \hat{u})\in\xcal$ in the sense of  Prop.~\ref{thm:trialIsometry}, \ie
\begin{equation}
A_\circ^*[v](u,\hat{u}) := {(u, -b\nabla v +cv)}_{L^2(\Omega)} + {(\hat{u}, v\restr{\Gamma_+})}_{\Ltwoout}.
\end{equation}

Provided that $A_\circ^*$ is injective on $V$ (which holds under mild assumptions on the data functions, see~\cite[Prop.~2.2]{BrunkenSmetanaUrban}) we are now in the position to define a norm on $V$ as follows
\begin{equation}
  \norm{v}_V := \norm{A_\circ^*[v]}_{\xcal'} = \norm{R_\xcal(A_\circ^*[v])}_\xcal.
\end{equation}

By a simple variational argument one sees that for a given $v\in V$ the Riesz-representative $r_v := R_\xcal(A_\circ^*[v])\in\xcal$ fulfills $\Phi(r_v) = (-b\nabla v + cv, v\restr{\Gamma_+})$. We thus obtain
\begin{equation*}
\norm{v}_V^2 = \norm{r_v}_\xcal^2 = \norm{\Phi(r_v)}_{\xcal_{L^2}}^2 = \norm{-b\nabla v + cv}_{L^2(\Omega)}^2 + \norm{v\restr{\Gamma_+}}_{\Ltwoout}^2.
\end{equation*}
By closure we finally obtain the test space $\ycal := \clos_{\norm{\cdot}_V}(V)$ of our ultraweak formulation equipped with the norm
\begin{equation}
\norm{y}_\ycal := \norm{A^*[y]}_{\xcal'}
\end{equation}
where $A^*:\ycal \rightarrow \xcal'$ is the continuous extension of $A_\circ^*$ to $\ycal$.

The following proposition gives a characterization of the test space $\ycal$:
\begin{proposition}
  The test space $\ycal$ is isomorphic to the Sobolev-space
  \begin{equation}
H^1(\vec{b}, \Omega) := \{v\in L^2(\Omega) \;|\; \vec{b}\nabla v \in L^2(\Omega)\}.
  \end{equation}
\end{proposition}

\subsection{Optimally stable ultraweak formulation and normal equation}
With the definitions of the trial space $\xcal$, the test space $\ycal$ and the adjoint operator $A^*$ we are now prepared to give an ultraweak 
variational formulation for reactive transport as follows.
\begin{definition}[Ultraweak variational formulation of reactive transport]
$u\in\xcal$ is called a solution of the ultraweak variational formulation, if it satisfies
\begin{equation} \label{eq:continuousProblem}
 \quad {(u,A^*[v])}_{\xcal \times \xcal'} = f(v) \qquad \forall v\in\ycal.
\end{equation}
with right-hand side
$
f(v) := {(f_\circ, v)}_{L^2(\Omega)} - {(g_D, v)}_{\Ltwoout}.
$
\end{definition}
\begin{proposition}\label{prop:genericIsometry}
The mappings $A: \xcal\rightarrow\ycal'$ and $A^*:\ycal\rightarrow \xcal'$ are isometries, \ie
\begin{equation*}
  \ycal = A^{-*}\xcal', \quad \xcal = A^{-1}\ycal'\vspace*{-1em}
\end{equation*}
and\vspace*{-1em}
\begin{equation*}
  \norm{A}_{\mathcal{L}(\xcal,\ycal')} = \norm{A^*}_{\mathcal{L}(\ycal,\xcal')}
  = \norm{A^{-1}}_{\mathcal{L}(\ycal',\xcal)} = \norm{A^{-*}}_{\mathcal{L}(\xcal',\ycal)}
  = 1.
\end{equation*}
\end{proposition}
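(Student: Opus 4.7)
The plan is to first establish that $A^*$ is a surjective isometry, and then derive the corresponding statement for $A$ by a duality argument. The isometry property of $A^*$ is essentially built into the construction: the norm on $\ycal$ was defined precisely as $\norm{v}_\ycal = \norm{A_\circ^*[v]}_{\xcal'}$ on $V$, so the restriction of $A^*$ to $V$ is tautologically an isometry into $\xcal'$. Passing to the continuous extension on $\ycal = \clos_{\norm{\cdot}_V}(V)$ preserves this, yielding $\norm{A^*[v]}_{\xcal'} = \norm{v}_\ycal$ for all $v\in\ycal$, and the range $A^*(\ycal)$ is automatically closed in $\xcal'$ as the isometric image of a complete space.

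The substantive step is to show surjectivity of $A^*$, equivalently density of $A^*(\ycal)$ in $\xcal'$. Via the Riesz map $R_\xcal$ combined with the isometry $\Phi$ of Prop.~\ref{thm:trialIsometry}, the functional $A_\circ^*[v]\in\xcal'$ corresponds to the pair $(-\vec{b}\nabla v + cv,\; v\restr{\Gamma_+}) \in L^2(\Omega)\times\Ltwoout$. I would take a candidate $(p,q)$ in the $\xcal_{L^2}$-orthogonal complement of all such pairs, so that
\[
(p,\, -\vec{b}\nabla v + cv)_{L^2(\Omega)} + (q,\, v\restr{\Gamma_+})_{\Ltwoout} = 0 \qquad \forall v\in C^\infty(\Omega),
\]
first test against $v\in C_c^\infty(\Omega)$ to deduce (using $\nabla\cdot\vec{b}=0$) that $\vec{b}\nabla p + cp = 0$ weakly in $L^2(\Omega)$, and then integrate by parts with general $v\in C^\infty(\Omega)$ to extract the boundary conditions $p\restr{\Gamma_-}=0$ and $q = p\restr{\Gamma_+}$. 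Under the mild data assumptions ensuring injectivity of the forward transport problem (analogous to the injectivity of $A_\circ^*$ already invoked to define $\norm{\cdot}_\ycal$), this forces $p=0$ and hence $q=0$. Consequently $A^*(\ycal) = \xcal'$, giving $\ycal = A^{-*}\xcal'$ and $\norm{A^*}_{\mathcal{L}(\ycal,\xcal')} = \norm{A^{-*}}_{\mathcal{L}(\xcal',\ycal)} = 1$.

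The claim for $A$ then follows by duality. I would define $A:\xcal\to\ycal'$ through the pairing $\langle Au,\,v\rangle_{\ycal'\times\ycal} := (u,\,A^*[v])_{\xcal\times\xcal'}$. Cauchy-Schwarz combined with the isometry of $A^*$ immediately yields $\norm{Au}_{\ycal'}\leq\norm{u}_\xcal$. For the matching lower bound, let $\phi_u\in\xcal'$ be the Riesz dual of $u$, so that $\norm{\phi_u}_{\xcal'}=\norm{u}_\xcal$ and $\phi_u(u)=\norm{u}_\xcal^2$; by the surjectivity of $A^*$ just established, there exists $v^*\in\ycal$ with $A^*[v^*]=\phi_u$ and $\norm{v^*}_\ycal=\norm{u}_\xcal$, whence $\langle Au,v^*\rangle = \norm{u}_\xcal^2$ gives $\norm{Au}_{\ycal'}\geq\norm{u}_\xcal$. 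Bijectivity of $A$ onto $\ycal'$ then follows from bijectivity of $A^*$ together with reflexivity of $\xcal$, so all four operator norms equal $1$ and $\xcal = A^{-1}\ycal'$.

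The main obstacle will be the density/surjectivity step: it is the only place where genuine analytic input beyond the formal construction enters, relying on a forward well-posedness (injectivity) statement for $A_\circ$ that complements the already-used injectivity of $A_\circ^*$. The remainder is essentially bookkeeping with the Riesz identifications and the definitions of $\xcal$ and $\ycal$.
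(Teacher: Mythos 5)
Your proof is correct and follows essentially the same route as the paper, which simply defers to the argumentation of Dahmen--Huang--Schwab (Prop.~2.1): the isometry of $A^*$ is built into the definition of $\norm{\cdot}_\ycal$, surjectivity reduces to a density argument hinging on injectivity of the forward transport operator, and the statement for $A$ follows by duality using reflexivity of $\xcal$. Your write-up makes explicit the one genuinely analytic ingredient (uniqueness for the homogeneous forward problem with zero inflow data) that the paper leaves implicit in the citation.
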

\begin{proof}
  The proof follows the argumentation in~\cite[Prop.~2.1]{DahmenHuangSchwab}.
\end{proof}

As a consequence we obtain the following corollary, which shows optimal stability of the ultraweak variational formulation.
\begin{corollary}[Optimal stability]\label{cor:generalProblem}
The variational formulation~\eqref{eq:continuousProblem}
is well-posed and has optimal condition number $\kappa_{\xcal,\ycal}(A) := \norm{A}\,\norm{A^{-1}} = 1$.
\end{corollary}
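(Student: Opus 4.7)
The plan is to obtain the corollary as a near-immediate consequence of Proposition~\ref{prop:genericIsometry}, which already does all the heavy lifting.

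First I would rewrite \eqref{eq:continuousProblem} in operator form. By definition of the duality pairing between $\xcal$ and $\xcal'$, one has $(u, A^*[v])_{\xcal\times\xcal'} = (Au)(v)$, so finding a solution of \eqref{eq:continuousProblem} is equivalent to finding $u\in\xcal$ with $Au = f$ in $\ycal'$, where $f: v\mapsto (f_\circ, v)_{L^2(\Omega)} - (g_D, v)_{\Ltwoout}$. Before invoking the isometry result, I need $f\in\ycal'$. This follows from Cauchy--Schwarz combined with the identity
\begin{equation*}
\norm{v}_\ycal^2 = \norm{-\vec{b}\nabla v + cv}_{L^2(\Omega)}^2 + \norm{v\restr{\Gamma_+}}_{\Ltwoout}^2
\end{equation*}
derived in the excerpt: the boundary term $(g_D, v\restr{\Gamma_+})_{\Ltwoout}$ is directly controlled by the second summand, while the volume term $(f_\circ, v)_{L^2(\Omega)}$ is controlled via the injectivity assumption on $A_\circ^*$ that makes $\norm{\cdot}_\ycal$ a genuine norm dominating $\norm{v}_{L^2(\Omega)}$ as well.

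Once $f\in\ycal'$ is established, well-posedness and optimal conditioning both drop out of Proposition~\ref{prop:genericIsometry}. Since $A:\xcal\to\ycal'$ is an isometric isomorphism with $\norm{A}=\norm{A^{-1}}=1$, the equation $Au=f$ admits a unique solution $u = A^{-1}f\in\xcal$ satisfying $\norm{u}_\xcal = \norm{f}_{\ycal'}$, which is well-posedness in the sense of Hadamard. The condition number bound is then simply
\begin{equation*}
\kappa_{\xcal,\ycal}(A) = \norm{A}_{\mathcal{L}(\xcal,\ycal')}\,\norm{A^{-1}}_{\mathcal{L}(\ycal',\xcal)} = 1\cdot 1 = 1.
\end{equation*}

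The main obstacle is not inside the corollary at all but upstream in Proposition~\ref{prop:genericIsometry}; there the isometry is essentially built into the definition $\norm{v}_\ycal := \norm{A^*[v]}_{\xcal'}$, so the substantive content of the construction lies in the earlier verification that this expression is a norm and that the closure $\ycal$ is rich enough for $A^*$ to be surjective onto $\xcal'$. Given Prop.~\ref{prop:genericIsometry}, the proof of the corollary is effectively one line, and the only genuine check to perform is the continuity of $f$ on $\ycal$ discussed above.
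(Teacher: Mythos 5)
Your proposal matches the paper's own treatment: the corollary is stated there without a separate proof, precisely as an immediate consequence of the isometry property $\norm{A}=\norm{A^{-1}}=1$ from Proposition~\ref{prop:genericIsometry}, exactly as you argue. Your additional check that $f\in\ycal'$ is a sensible supplement the paper leaves implicit, though note that controlling $(f_\circ,v)_{L^2(\Omega)}$ by $\norm{v}_\ycal$ requires a quantitative stability (Poincar\'e-type) estimate for the adjoint transport problem, not merely the injectivity of $A_\circ^*$.
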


Since by Proposition\ \ref{prop:genericIsometry} every $u\in\xcal$ has a representation $u = R_\xcal A^*[w]$ for some $w\in\ycal$ we can substitute $u$
in the ultraweak formulation~\eqref{eq:continuousProblem} to obtain the equivalent (continuous) normal equation.
\begin{definition}[Normal equation of the ultraweak formulation]
$w\in\ycal$ is called a solution of the normal equation of the ultraweak formulation, if it satisfies
\begin{equation}\label{eq:transport:continuousNormalEq}
	 {(A^*[w], A^*[v])}_{\xcal'} \;=\;  f(v) \qquad \forall v\in\ycal
\end{equation}
or equivalently
\begin{equation}
 {(-b\nabla w + cw, -b\nabla v + cv)}_{L^2(\Omega)} + {(\gamma(w),\gamma(v))}_{\Ltwoout} \quad =\quad f(v).
\end{equation}
\end{definition}
This is essentially a LL*-method (see \eg~\cite{caiFOSLL}) applied to the minimization of the residual energy $\norm{A[u]-f}_{\ycal'}$.
\begin{remark}\label{rmk:strongTestProblem}
Let $w$ denote a solution of the normal equation~\eqref{eq:transport:continuousNormalEq}. If $w$ is regular enough (\eg $w\in C^2(\Omega)$), then $w$ solves the degenerated Poisson problem
\begin{equation}
\begin{cases}
  -\nabla\cdot(D\nabla w) &= f_\circ \qquad \text{in}\; \Omega \\
  -b\nabla w &= g_D \qquad \text{on}\; \Gamma_- \\
  (D\nabla w)\vec{\nu} + w &= 0 \qquad \text{on}\; \Gamma_+ \\
\end{cases}
\end{equation}
with a rank-$1$ diffusion tensor $D := \vec{b}\otimes\vec{b}$.
\end{remark}
The equivalent formulation of the normal equation will serve as the starting point for the definition of an optimal stable approximation method in the following section.

\section{A test-space only discretization}

As indicated in the previous section, we propose to use the normal equation~\eqref{eq:transport:continuousNormalEq} to define an optimally stable approximation scheme.
It is thus obvious, that a discretization can be fully based on a discrete approximate test space $\ycal^\delta$.

\subsection{Discrete normal equation and functional reconstruction}

Let $\ycal^\delta \subseteq \ycal$ be a conforming discretization of the optimal test space (\ie using a standard Lagrange finite element space). Based on~\eqref{eq:transport:continuousNormalEq} we then define the \textit{discrete} normal equation using Galerkin-projection.
\begin{equation}\label{eq:discreteNormalEq}
\text{Find}\; w^\delta \in \ycal^\delta: \quad {(A^*[w^\delta], A^*[v^\delta])}_{L^2(\Omega)} = f(v^\delta) \qquad \forall v^\delta \in \ycal^\delta.
\end{equation}
Note that this is still an optimally conditioned problem. Given the discrete solution $w^\delta$ we may reconstruct the discrete solution $u^\delta = A^*[w^\delta]$. Technically, this solution lies in the finite-dimensional subspace $\xcal^\delta := A^*[\ycal^\delta] \subseteq \xcal$, however, due to its non-accessible structure this space is of no practical use.

Previous work often used knowledge of the structure of $A^*$ to determine a larger, more traditional (DG-)space $\zcal^\delta \supsetneq \xcal^\delta$ and then assembled the matrix $\underline{A}$ representing the operator $A^*: \xcal^\delta \rightarrow \zcal^\delta$ in the respective standard FE-bases. In this case, one can determine the system matrix $\underline{A}^{NE}$ of the normal equation as $\underline{A}^{NE} = \underline{A}^T \underline{M}_\zcal \underline{A}$ (where $\underline{M}_{\zcal}$ denotes the inner-product matrix in $\zcal^\delta$), solve the linear system\vspace*{-0.5em}
\begin{equation}\label{eq:linearEquationSystem}
  \underline{A}^{NE}\underline{w} = \underline{f}\vspace*{-0.5em}
\end{equation}
and compute the coefficients $\underline{u}$ of $u^\delta \in\xcal^\delta \subset \zcal^\delta$ in the basis of $\zcal^\delta$ by simply computing $\underline{u} = \underline{A}\,\underline{w}$.

However, this is suboptimal as the construction of a discrete larger space $\zcal^\delta$ is only feasible or even possible with suitable additional assumptions on the data, e.g. (elementwise) constant data functions. For non-constant reaction or velocities one has to resort to a nonconforming choice $\zcal^\delta \not\supset \xcal^\delta$ introducing an additional projection error which might be difficult to estimate or control.

Here, we propose an approach that avoids ever computing a matrix $\underline{A}$ representing the operator $A^*$. The system matrix of the normal equation $\underline{A}^{NE}$ can also be directly assembled in a basis of $\ycal^\delta$ which means basically assembling a normal equation using the full infinite dimensional trial space $\xcal$. The reconstruction $u^\delta := A^*[w^\delta]$ is now seen as an element of $\xcal$ (we technically know that it lies in the finite dimensional subspace $\xcal^\delta \subset \xcal$ but this does not give us any useful information). The crucial insight is that in almost all applications only functional evaluations of $u^\delta$ are needed. Examples include point-evaluations for the visualization of $u^\delta$ or the computation of quantities of interest via numerical quadrature (\ie $\norm{u^\delta}$). Therefore, we replace the reconstruction by functional evaluations and e.g. do a pointwise reconstruction. Note that in this way we do not introduce any additional projection error.

\subsection{Conditioning of the system matrix and solving the linear system}
Solving the linear equation system~\eqref{eq:linearEquationSystem} is actually quite a challenging task - a problem that has to our knowledge not been discussed so far. Although Problem~\eqref{eq:discreteNormalEq} is optimally stable in theory, the condition of the system matrix $\underline{A}^{NE}$ still scales quadratically in the inverse grid width $h^{-1}$ and is thus a significant challenge even for moderately large problems. To better understand these seemingly conflicting statements consider the non-symmetric formulation of~\eqref{eq:discreteNormalEq}:
\begin{equation}\label{eq:discreteNonsymmetricEq}
\text{Find}\; u^\delta \in \xcal^\delta: \quad (u^\delta, A^*[v^\delta]) = f(v^\delta) \qquad \forall v^\delta \in \ycal^\delta.
\end{equation}
Let $\{\psi_i\}_{i=1}^{N}$ be a basis of $\ycal^\delta$ (\eg a finite element basis). Then, the set $\{ \varphi_i \}_{i=1}^N$, $\varphi_i := A^*[\psi_i]$ forms a basis of $\xcal^\delta$ and the matrix $\underline{A}$ representing $A^*$ in these bases is the identity matrix. The condition of the system matrix $\underline{A}^{NE}$ is still of order $\mathcal{O}(h^{-2})$ since the trial functions $\varphi_i$ have, contrary to classic finite elements, in this case a magnitude of $\mathcal{O}(h^{-1})$.

As mentioned in Remark~\ref{rmk:strongTestProblem}, the normal equation can also be seen as the weak form of a specific Poisson-problem with rank-deficient diffusion tensor $D$. In the following numerical experiments we thus employed an algebraic multigrid for preconditioning and a conjugate gradient (CG) solver - methods that are known to perform well for this type of problems. For more complex problems (\eg for velocity fields with (locally) small magnitude) the efficient preconditioning and solving of the linear equation system~\eqref{eq:linearEquationSystem} still needs further investigation.

\section{Numerical experiments}

\begin{figure}[t]
  \hfill
  \begin{subfigure}{0.3\textwidth}
    \centering
    \begin{tikzpicture}[scale=0.8\textwidth/1cm]
      \draw[black,thick] (0.0,0.0) rectangle (1.0,1.0);
      \draw[red,ultra thick] (0.0, 1.0) -- (0.0, 2./3);
      \node[black] at (0.1, 5/6) {$\Gamma_{in}$};
      \draw[cyan,ultra thick] (1.0, 0.0) -- (1.0, 1./3);
      \node[black] at (0.85, 1/6) {$\Gamma_{out}$};
      \node[black] at (0.5, 0.07) {$\Gamma_0$};
      \node[black] at (0.
      5, 0.9) {$\Gamma_0$};
      \filldraw[fill=black!40!white, draw=black, opacity=0.8] (0.0, 0.4) rectangle (1.0, 0.6);
      \node[black] at (0.5,0.5) {$\Omega_{reac}$};
    \end{tikzpicture}
    \vspace*{0.5em}
    \caption{Problem setup}
  \end{subfigure}
  \hfill
  \begin{subfigure}{0.3\textwidth}
    \centering
    \includegraphics[width=0.9\textwidth]{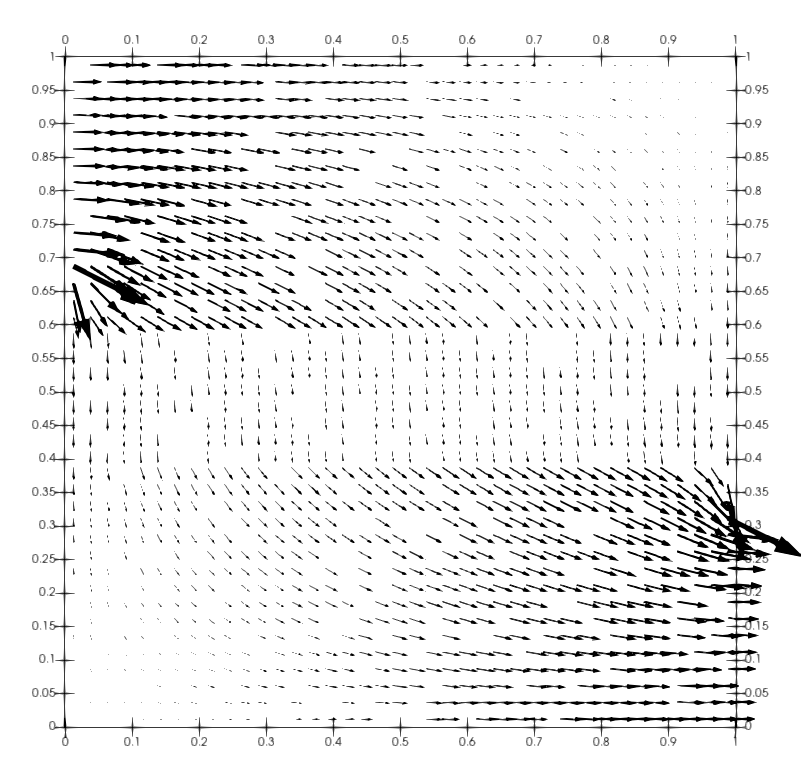}
    \caption{Velocity field $\vec{b}$}
  \end{subfigure}
  \hfill
  \begin{subfigure}{0.3\textwidth}
    \centering
    \includegraphics[width=0.9\textwidth]{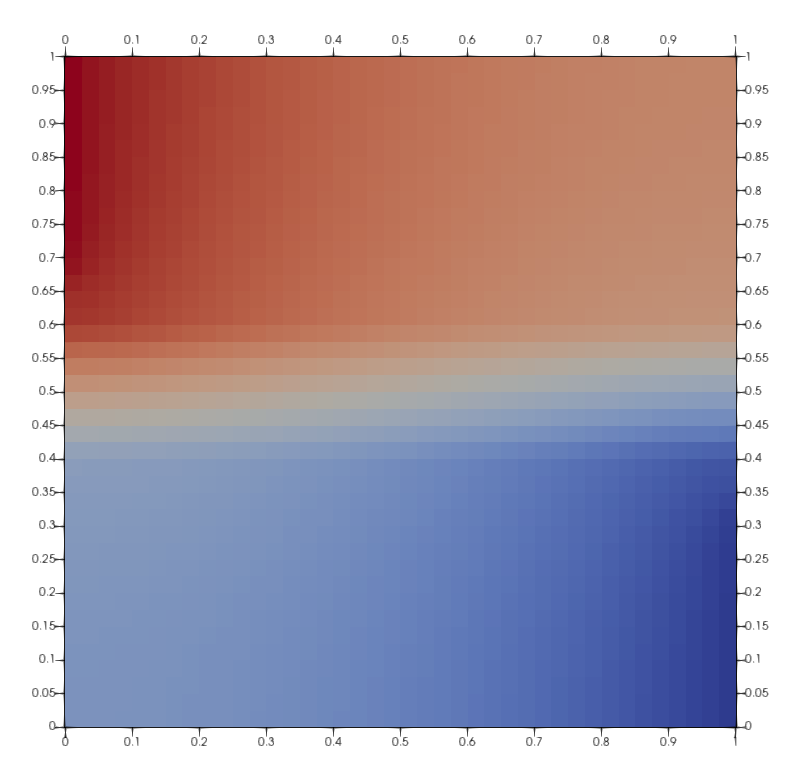}
    \caption{Pressure field $p$}
  \end{subfigure}\vspace*{-.2cm}
  \caption{\label{fig:testcases:catalysator} Problem setup and Darcy velocity field $\vec{b}$}\vspace*{-.75em}
\end{figure}

The discretization scheme and the following experiments were implemented using the DUNE-framework~\cite{bastian2021dune} and the DUNE-PDELab discretization toolbox%
\footnote{\url{https://www.dune-project.org/modules/dune-pdelab}}
\footnote{\url{https://doi.org/10.5281/zenodo.7950882}}.
We consider the reactive transport of a pollutant inside a catalytic
filter. Let $\Omega$ be the unit square with boundary $\Gamma :=
\partial\Omega$. We assume that the velocity $\vec{b}$ is given
as the solution to the Darcy-equation\vspace*{-0.5em}
\begin{equation*}
    \nabla\cdot \vec{b} = 0, \qquad  
    \vec{b} = -k\nabla p \\
\end{equation*}
subject to the boundary conditions\vspace*{-0.5em}
\begin{equation*}
  p = 1 \quad\text{on}\; \Gamma_{in}, \qquad
  p = 0 \quad\text{on}\; \Gamma_{out}, \qquad
  \vec{b} = 0 \quad\text{on}\; \Gamma_0 \;:= \Gamma \setminus (\Gamma_{in} \cup \Gamma_{out}).
\end{equation*}
Here, $k \in L^\infty(\Omega)$ denotes the permeability field. We introduce the reactive domain (washcoat) $\Omega_{reac} \subset \Omega$ and assume that the permeability in the reactive domain is significantly smaller, \ie $k = k_{min}\cdot\mathbbm{1}_{\Omega_{reac}} + \mathbbm{1}_{\Omega \backslash \Omega_{reac}}$, $k_{min}\in\R^+$. Similarly, the reaction function is given by $c = c_0 \cdot\mathbbm{1}_{\Omega_{reac}}$, $c_0\in\R^+$. All chosen parameters of the problem are summarized in Table~\ref{tab:params}.

\begin{table}[t]
  \begin{center}
    \renewcommand{\arraystretch}{1.4}
    \begin{tabular}{|c|c|c|c|c|c|c|c|}
    \hline
    $\Omega$ & $\Omega_{reac}$ & $\Gamma_{in}$ & $\Gamma_{out}$ & $g_D(z)$ & $f_\circ(x)$ & $k_{min}$ & $c_0$ \\
    \hline
    $[0,1]^2$ & $[0,1]\times [0.4,0.6]$ & $\{0\}\times (\tfrac{2}{3},1)$ & $\{1\} \times (0,\tfrac{1}{3})$ &$\sin(3\pi z)^2$ & $0$ & $10^{-1}$ & $0.5$ \\
    \hline
    \end{tabular}
  \end{center}
  \caption{Chosen parameters for the catalytic filter problem}\vspace*{-1cm}
  \label{tab:params}
\end{table}

In order to inspect the reconstruction $u^\delta$ we perform a voxel-wise evaluation on a refined mesh (\ie a projection into $\mathbb{P}^0$). In Fig.~\ref{fig:solutions} this is depicted alongside a solution obtained by a first order SIPG-method. In particular one notices nonaligned gradients when using a first order discretization of $\ycal$ (Fig.~\ref{fig:firstOrder}). However, since $u^\delta$ is only a $L^2$-like best approximation we do not expect $u^\delta$ to provide meaningful information \wrt the gradient (although the magnitude of the derivation still needs to converge in order to achieve $L^2$-convergence). Additionally, $u^\delta$ is the $\xcal_{L^2}$-best approximation from a non-standard space $\xcal^\delta$ where no information about its approximation qualities are given.

\begin{figure}[ht]
  \begin{subfigure}{0.315\textwidth}
    \centering
    \includegraphics[width=\linewidth]{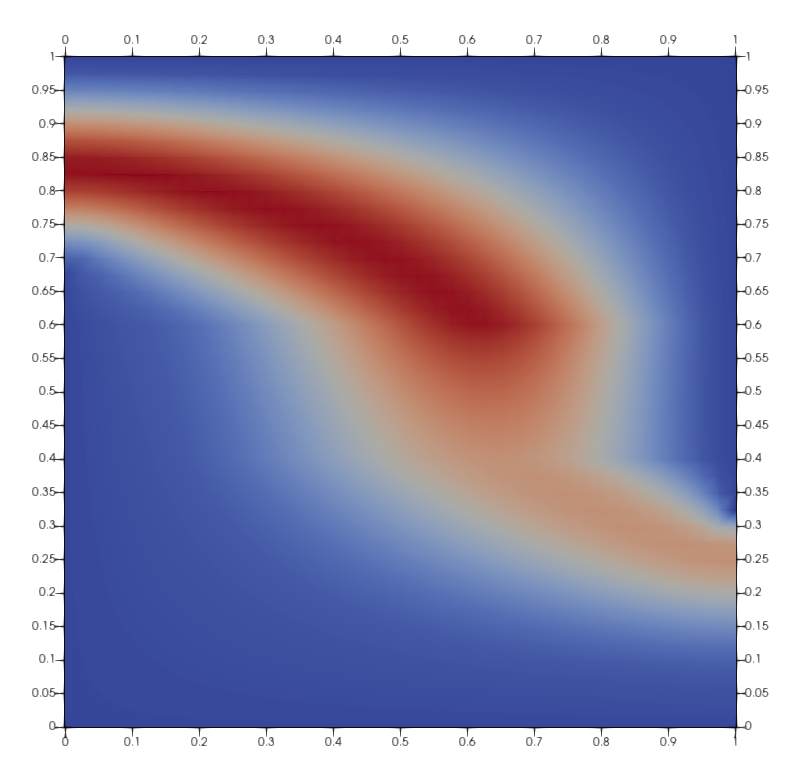}
    \caption{Solution computed with a first-order SIPG-scheme.}
  \end{subfigure}
  \hfill
  \begin{subfigure}{0.32\textwidth}
    \centering
    \includegraphics[width=.985\linewidth]{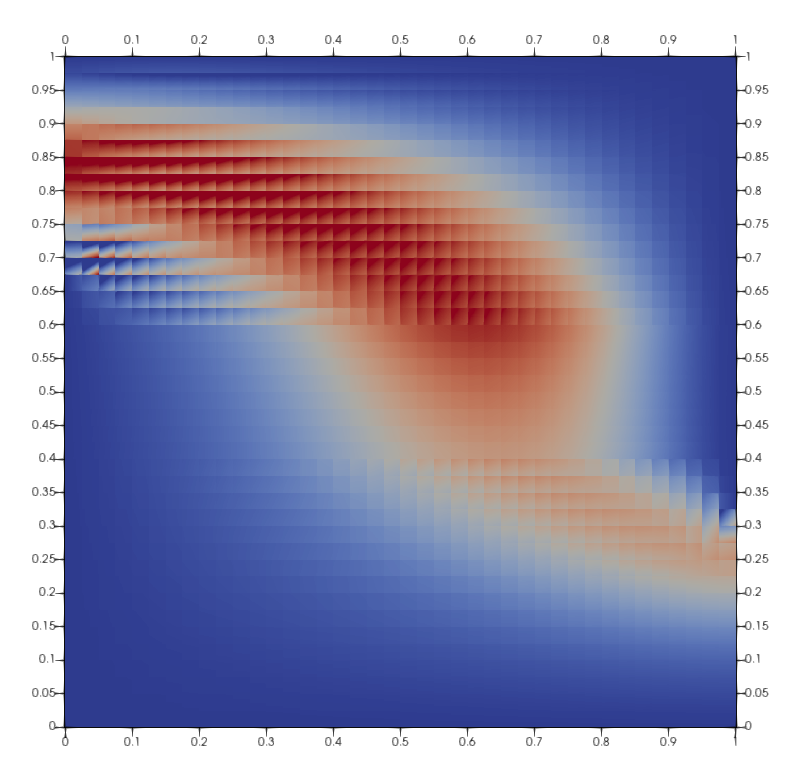}
    \caption{\label{fig:firstOrder}Reconstructed solution for first order test functions.}
  \end{subfigure}
  \hfill
  \begin{subfigure}{0.345\textwidth}
    \centering
    \includegraphics[width=.92\linewidth]{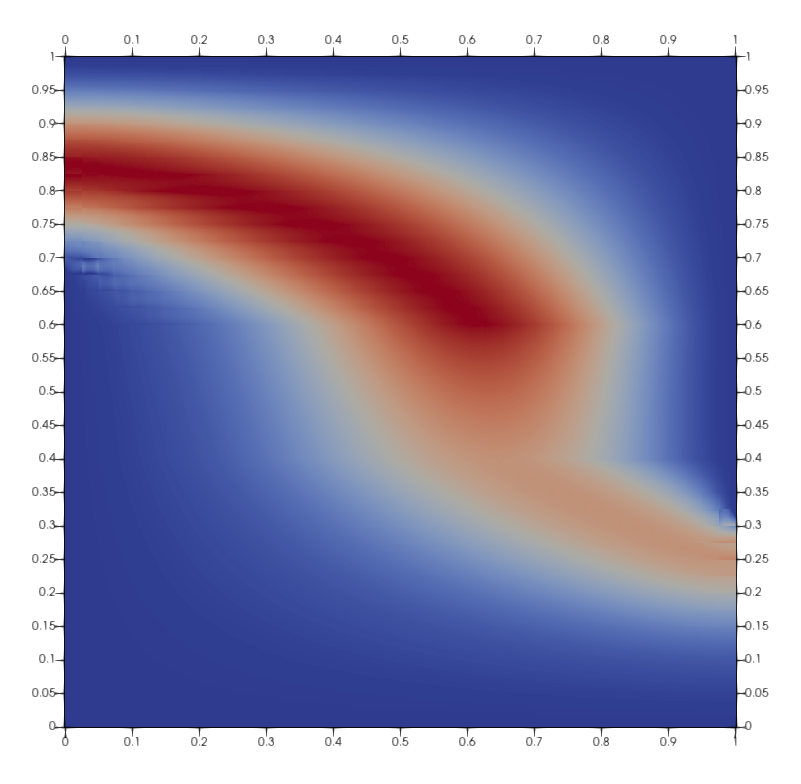}
    \caption{\label{fig:secondOrder}Reconstructed solution for second order test functions}
  \end{subfigure}\vspace*{-.2em}
  \caption{\label{fig:solutions} Solutions obtained by different discretization methods. All of them are based on a structured grid with $h^{-1}=40$.}\vspace*{-.5em}
\end{figure}

Finally, we investigate the convergence of the method under $h$-refinement. In~\cite{keithApriori} it was shown that the rates depend solely on the regularity of the test space solution. In~\cite{BrunkenSmetanaUrban} the dependence of the convergence rate on the regularity of the inflow condition (which in turn determines the regularity of the test space solution) has also been numerically evaluated for a linear transport problem. In our chosen catalytic filter problem we observe a convergence order of about $1.2$ for linear and $2.3$ for quadratic test functions (Fig.~\ref{fig:hconvergence}).

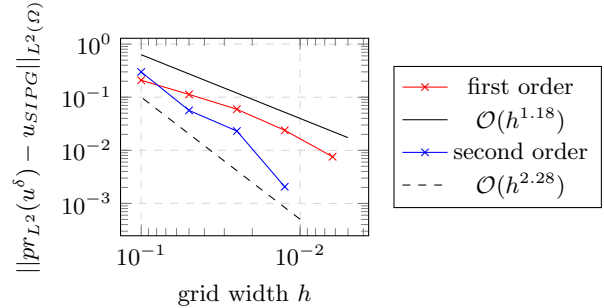
\begin{figure}[ht]
\centering
\begin{tikzpicture}
  \begin{loglogaxis}[
          width=0.4\linewidth, 
          grid=major, 
          grid style={dashed,gray!30}, 
          xlabel=Global basis size, 
          xlabel=grid width $h$,
          x dir=reverse,
          ylabel= $\norm{pr_{L^2}(u^\delta) - u_{SIPG}}_{L^2(\Omega)}$,
          legend style={at={(1.1,0.5)},anchor=west},
        ]

        \addplot[color=red , mark=x]
        table[x=gridwidth,y=l2error,col sep=comma] {conv_data_first_order.csv};
        \addlegendentry{first order};
        \addplot[color=black, domain=5e-3:0.1]{10*x^(1.2)};
        \addlegendentry{$\mathcal{O}(h^{1.18})$};
        \addplot[color=blue , mark=x]
        table[x=gridwidth,y=l2error,col sep=comma] {conv_data_second_order.csv};
        \addlegendentry{second order};
        \addplot[color=black, dashed, domain=1e-2:0.1]{20*x^(2.3)};
        \addlegendentry{$\mathcal{O}(h^{2.28})$};
  \end{loglogaxis}
\end{tikzpicture}
\vspace{-1em}
\caption{\label{fig:hconvergence} Convergence under $h$-refinement}\vspace{-2em}
\end{figure}

\section{Conclusion}\vspace*{-0.75em}
In this contribution we derived an ultraweak, optimally stable
formulation for reactive
transport.
In contrast to previous work we
did not introduce additional boundary conditions on the test space but
instead imposed them weakly by including boundary terms in the adjoint
operator. We also showed that a standard FE-discretization of the
test space is sufficient to solve the normal equations and
perform classic functional evaluations of the reconstructed solution
without discretizing the trial space explicitly.
Future
work will concern the robust solving of the normal equations,
as well as using model order reduction techniques to efficiently solve
parameter-dependent reactive transport problems.  \vspace*{-1.25em}

\subsubsection{Acknowledgements} 
The authors acknowledge funding by the BMBF under contract 05M20PMA
and by the Deutsche Forschungsgemeinschaft under Germany’s Excellence
Strategy EXC 2044 390685587, Mathematics M\"unster: Dynamics --
Geometry -- Structure.\vspace*{-1.25em}

\bibliographystyle{abbrv}
\bibliography{ms}

\begin{thebibliography}{1}

\bibitem{bastian2021dune}
P.~Bastian, M.~Blatt, A.~Dedner, N.-A. Dreier, C.~Engwer, R.~Fritz,
  C.~Gr\"{u}ninger, D.~Kempf, R.~Kl\"{o}fkorn, M.~Ohlberger, and O.~Sander.
\newblock The {{DUNE}} framework: basic concepts and recent developments.
\newblock {\em Comput. Math. Appl.}, 81:75--112, 2021.

\bibitem{BrunkenSmetanaUrban}
J.~Brunken, K.~Smetana, and K.~Urban.
\newblock ({P}arametrized) first order transport equations: realization of
  optimally stable {P}etrov-{G}alerkin methods.
\newblock {\em SIAM J. Sci. Comput.}, 41(1):A592--A621, 2019.

\bibitem{caiFOSLL}
Z.~Cai, T.~A. Manteuffel, S.~F. McCormick, and J.~Ruge.
\newblock First-order system {$\mathcal{L}\mathcal{L}^*$} ({FOSLL*}): Scalar
  elliptic partial differential equations.
\newblock {\em SIAM Journal on Numerical Analysis}, 39(4):1418--1445, 2001.

\bibitem{DahmenHuangSchwab}
W.~Dahmen, C.~Huang, C.~Schwab, and G.~Welpers.
\newblock Adaptive {P}etrov-{G}alerkin methods for {F}irst order transport
  equations.
\newblock {\em SIAM Journal on Numerical Analysis}, 50(5):2420--2445, 2012.

\bibitem{demkowiczDPG1}
L.~Demkowicz and J.~Gopalakrishnan.
\newblock A class of discontinuous {P}etrov--{G}alerkin methods. {P}art {I}:
  The transport equation.
\newblock {\em Computer Methods in Applied Mechanics and Engineering},
  199(23-24):1558--1572, 2010.

\bibitem{HenningPalitta}
J.~Henning, D.~Palitta, V.~Simoncini, and K.~Urban.
\newblock An ultraweak space-time variational formulation for the wave
  equation: Analysis and efficient numerical solution.
\newblock {\em ESAIM: M2AN}, 56(4):1173--1198, 2022.

\bibitem{keithApriori}
B.~Keith.
\newblock A priori error analysis of high-order {LL*}({FOSLL*}) finite element
  methods.
\newblock {\em Computers \& Mathematics with Applications}, 103:12--18, 2021.

\end{thebibliography}

\end{document}